\documentclass[12pt,amscd]{amsart}
\usepackage[all]{xy}
\usepackage{graphicx}
\usepackage{amsmath,amsxtra,amssymb,latexsym, amscd,amsthm}

\usepackage{tikz}
\usepackage{ytableau}

 \usepackage{indentfirst}
\usepackage[mathscr]{eucal}
  \usepackage[pagebackref=true]{hyperref}

\newtheorem{thm}{Theorem}[section]
\newtheorem{cor}[thm]{Corollary}
\newtheorem{lem}[thm]{Lemma}

\theoremstyle{definition}
\newtheorem{defn}[thm]{Definition}

\newtheorem{conj}[thm]{Conjecture}

\numberwithin{equation}{section}

\DeclareMathOperator{\depth}{depth}

\DeclareMathOperator{\reg}{reg}

\def\Adm{\operatorname{Adm}}

\def\a {\mathbf a}

\def\k {\mathrm{k}}

\begin{document}

\title{Ordered alternating paths and the depth of symbolic powers of cover ideals of graphs}

\author[N.T. Hang]{Nguyen Thu Hang }
\address{Thai Nguyen University of Sciences, Phan Dinh phung Ward, Thai Nguyen, Vietnam}
\email{hangnt@tnus.edu.vn}

\author[N. T. T. Tam]{Nguyen Thi Thanh Tam}
\address{Hung Vuong University, Phu Tho, Vietnam}
\email{nguyenthithanhtam@hvu.edu.vn}

\author{Thanh Vu}
\address{Institute of Mathematics, VAST, 18 Hoang Quoc Viet, Hanoi, Vietnam}
\email{vuqthanh@gmail.com}

\subjclass[2020]{13D02, 05E40, 13F55}
\keywords{depth; tree; cover ideal; ordered matching; alternating path}

\date{}

\dedicatory{Dedicated to Professor Le Tuan Hoa on the occasion of his 70th birthday}
\commby{}

\begin{abstract} Let $G$ be a simple graph with cover ideal $J(G)$ in a polynomial ring $S$ in $|V(G)|$ variables. For a matching $M$ of $G$, we denote by $\ell(M)$ the length of the longest $M$-alternating path in $G$. We define $\alpha_t(G)$ to be the maximum size of an ordered matching $M$ of $G$ such that $\ell(M) \le 2t-1$. We then prove that 
$$\operatorname{depth}(S/J(G)^{(t)}) \le |V(G)| - 1 - \alpha_t(G)$$
for all $t \ge 1$, where $J(G)^{(t)}$ denotes the $t$-th symbolic power of $J(G)$, and that equality holds when $G$ is a forest.
\end{abstract}

\maketitle
\section{Introduction}
\label{sect_intro}
Alternating paths play a crucial role in matching theory \cite{LP}. Alternating paths with respect to an ordered matching, which we call \emph{ordered alternating paths}, were used to control the stabilization index of the depth of symbolic powers of cover ideals of graphs \cite{BHHT}. In this work, we show that this notion can also be used to control the depth of each symbolic power of the cover ideal of a graph. We now introduce the necessary notation.

Let $G$ be a simple graph. A subset $M \subseteq E(G)$ is called a \emph{matching} of $G$ if no two edges in $M$ share a common vertex. A subset of vertices $U \subseteq V(G)$ is called \emph{independent} if
\( \{u,v\} \notin E(G)
\) for all $u,v \in U$.

\begin{defn}
Let
\[
M = \{ \{ u_1,v_1\}, \ldots, \{u_r,v_r\}\}
\]
be a matching of $G$. The matching $M$ is called an \emph{ordered matching} of $G$ if 
\begin{enumerate}
    \item $\{u_1, \ldots, u_r\}$ is an independent set;
    \item \( \{u_i,v_j\} \in E(G)\) implies that $i \le j$.
\end{enumerate}
The set $A=\{u_1,\ldots,u_r\}$ is called the \emph{free parameter set} of $M$, and the set $B=\{v_1,\ldots,v_r\}$ is called the \emph{partner set} of $M$. The \emph{ordered matching number} of $G$, denoted by $\nu_o(G)$, is the maximum cardinality of an ordered matching of $G$.
\end{defn}

\begin{defn}
Let $G$ be a simple graph, and let $M$ be a matching of $G$. An \emph{$M$-alternating path} is a path whose edges alternately belong to $M$ and $E(G)\setminus M$. Let $\ell(M)$ denote the maximum length of an $M$-alternating path. For each integer $t \ge 1$, we define
\[
\alpha_t(G)
=
\max\left\{
|M|
\;\middle|\;
\text{$M$ is an ordered matching of $G$ and } \ell(M)\le 2t-1
\right\}.
\]
\end{defn}
In general, an $M$-alternating path might not be simple. The length of such a path, by definition, is the number of edges that it traverses.

Let $S=\k[x_1,\dots,x_n]$ be a polynomial ring over a field $\k$, and let $G$ be a simple graph with vertex set $V(G)=[n]=\{1,\ldots,n\}$. Denote by $J(G)$ the cover ideal of $G$. Our main result is the following.

\begin{thm}\label{main}
Let $G$ be a simple graph with vertex set $V(G)=[n]$. Then, for every $t\ge 1$,
\[
\depth\!\left(S/J(G)^{(t)}\right)\le n-1-\alpha_t(G),
\]
where $J(G)^{(t)}$ is the $t$-th symbolic power of $J(G)$. Moreover, equality holds whenever $G$ is a forest.
\end{thm}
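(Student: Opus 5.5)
We will prove the upper bound through the degree-complex (Takayama) description of local cohomology of monomial ideals, and the equality for forests by induction on the number of vertices, using a depth lemma for a short exact sequence coming from a leaf.

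\emph{Upper bound.} Let $I=J(G)^{(t)}=\bigcap_{\{i,j\}\in E(G)}(x_i,x_j)^t$. For an exponent $\a\in\mathbb N^n$, the degree complex $\Delta_\a(I)$ has as facets the sets $[n]\setminus F$, where $F$ ranges over the minimal covers $F$ of $G$ such that $x^\a\notin (x_i:i\in F)^{(t)}$-type localizations. Concretely, since $I$ is an intersection of powers of primes, $\Delta_\a(I)$ is generated by the complements of vertex covers $C$ of $G$ whose associated components are not satisfied, i.e.\ $x^{\a}\notin (x_i,x_j)^t$ for some edge inside the corresponding localization. By Takayama's formula, $H^{i}_{\mathfrak m}(S/I)_{\a}\ne0$ iff $\tilde H_{i-|G_\a|-1}(\Delta_\a(I))\ne 0$, where $G_\a$ is the set of indices with $a_j<0$. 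Given an ordered matching $M=\{\{u_k,v_k\}\}_{k=1}^r$ with free set $A$, partner set $B$, and $\ell(M)\le 2t-1$, we will choose
\[
a_{u_k}=-1,\qquad a_{v_k}=t-1,\qquad a_w=0 \text{ for } w\notin A\cup B .
\]
Hence $G_\a=A$, and $|G_\a|=r$. We aim to show that $\Delta_\a(I)$, restricted to $[n]\setminus A$, has nonvanishing reduced homology in degree $n-r-2-r$ shifted appropriately, so that it is a sphere-like complex giving $H^{n-1-r}_{\mathfrak m}(S/I)\neq 0$. This forces $\depth S/I\le n-1-r$.

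The key computation is to identify the minimal nonfaces of $\Delta_\a(I)$. For every edge $\{u_k,v_k\}\in M$ the monomial $x^\a$ has degree $t-1<t$ on that edge, so each matching edge contributes. The ordering condition $\{u_i,v_j\}\in E\Rightarrow i\le j$ together with independence of $A$ should make the complex a join of $r$ copies of $S^0$ (pairs $\{v_k\}$ versus the rest), i.e.\ homologically a sphere. The role of $\ell(M)\le2t-1$ is to guarantee that non-matching edges do not kill this homology. A non-matching edge $\{v_i,v_j\}$ or a path through other vertices could be "unsatisfied" under a modified exponent. We will instead perturb $\a$ along alternating paths, raising the $v$-exponents by an amount depending on distance. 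Choosing $a_{v_k}$ as $t-1$ minus half the length of the longest alternating path ending at $v_k$ keeps every non-$M$ edge with total degree $\ge t$ while each $M$ edge stays below $t$. This is exactly where the bound $2t-1$ enters. This step, producing the right weights and verifying the homology, is the main obstacle. We will first try weights defined by alternating-path depth and check each edge type ($A$–$B$, $B$–$B$, $B$–outside, outside–outside) separately.

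\emph{Equality for forests.} We induct on $n$. If $G$ has an isolated vertex $x$, we split it off, since $S/I$ is then a polynomial extension in $x$. Otherwise we take a leaf $x$ with neighbor $y$ chosen at the end of a longest path, so that all neighbors of $y$ except possibly one are leaves. We will use the exact sequences obtained from $I:y^{j}$ and $(I,y)$ for $j=0,\dots,t$. Here $I:y^{t}=J(G\setminus y)^{(t)}$ and the colon ideals interpolate, with intermediate stages being symbolic powers of cover ideals of forests with lower $t$ on the star at $y$. This gives
\[
\depth S/I\ge\min_j \depth\bigl(S/(I:y^j,y)\bigr).
\]
Each term reduces to a smaller forest, where induction applies. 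We then show combinatorially that every resulting value is at least $n-1-\alpha_t(G)$. To do this, we build an ordered matching of $G$ from an optimal one of the smaller forest by adding $\{x,y\}$ with $x$ placed first, when this is allowed by the path-length bound, and otherwise keeping it. The combinatorial bookkeeping of $\ell(M)$ under adding the pendant edge is routine but delicate.
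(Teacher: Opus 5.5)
Both halves of your plan have genuine gaps.

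\textbf{Upper bound.} The exponent vector you propose cannot work. For $I=J(G)^{(t)}$, a set $F\subseteq[n]\setminus G_{\mathbf a}$ lies in $\Delta_{\mathbf a}(I)$ iff some edge $\{i,j\}$ of $G$ disjoint from $F\cup G_{\mathbf a}$ has $a_i+a_j<t$. So $\Delta_{\mathbf a}(I)$ is the Alexander dual, on $[n]\setminus G_{\mathbf a}$, of the independence complex of the graph $H_{\mathbf a}$ on $[n]\setminus G_{\mathbf a}$ whose edges are those $\{i,j\}\in E(G)$ with $a_i+a_j<t$.

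Putting $a_{u_k}=-1$ places $A$ in $G_{\mathbf a}$, which removes every edge of $M$ from $H_{\mathbf a}$. (Note also $a_{u_k}+a_{v_k}=t-2$, not $t-1$.) Meanwhile $a_w=0$ on the unmatched vertices keeps all edges among them. The resulting complex has nothing to do with $M$. Already for $G$ a single edge and $t=1$, $S/I=\k$, and your $\mathbf a=(-1,0)$ gives $H^i_{\mathfrak m}(S/I)_{\mathbf a}=0$ for all $i$. Changing only the $v$-exponents does not help, since $A$ stays in $G_{\mathbf a}$.

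What is needed is:
\begin{itemize}
\item negative exponents exactly on the vertices not covered by $M$;
\item on $V(M)$, a vector with $a_{u_i}+a_{v_i}<t$ but $a_x+a_y\ge t$ for every other edge of $G[V(M)]$.
\end{itemize}
This is precisely a certificate that $(V(G),M)$ is $t$-admissible. With it, $H_{\mathbf a}=M$, $\mathrm{Ind}(M)\simeq S^{r-1}$, and Takayama yields $H^{n-1-r}_{\mathfrak m}(S/I)_{\mathbf a}\neq 0$; this is Theorem~\ref{thm:admissible} in this instance. The certificate is the entire combinatorial content, and you leave it as ``the main obstacle.''

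The paper takes $a(u_i)=d_i-1$ and $a(v_i)=t-d_i$, where $d_i$ is the maximal number of $M$-edges on an $M$-admissible path from $v_i$. Then:
\begin{itemize}
\item $d_i\le t$, since $\ell_0(M)\le 2t-1$;
\item an edge $\{u_i,v_j\}$ with $i\neq j$ forces $i<j$, and prepending $v_i,u_i$ to admissible paths from $v_j$ gives $d_i\ge d_j+1$;
\item an edge $\{v_i,v_j\}$ gives $2d_i+2d_j-1\le \ell_1(M)\le 2t-1$ by Lemma~\ref{lem_length};
\item $A$ spans no edges.
\end{itemize}
The $u$-exponents must vary with $d_i$; that is exactly where the ordering enters.

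\textbf{Equality for forests.} The reduction you describe is not correct. It is true that $I:y^t=J(G\setminus y)^{(t)}S$. But for $0<j<t$,
\[
I:y^j=\bigcap_{w\in N(y)}(x_y,x_w)^{t-j}\cap\bigcap_{\{i,k\}\in E(G\setminus y)}(x_i,x_k)^t
\]
is a mixed-exponent intersection. Modulo $y$, every stage with $j<t$, including $(I,y)$ itself, becomes $\bigl(\prod_{w\in N(y)}x_w^{t-j}\bigr)\cap J(G\setminus y)^{(t)}$. None of these is a symbolic power of the cover ideal of a smaller forest, so your induction hypothesis does not apply. Making the induction work would require a generalized statement for weighted ideals $\bigcap_e(x_i,x_j)^{t_e}$ (intersected with monomials), together with a matching invariant for them; the proposal provides neither.

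The idea that avoids all this is to bound $\reg I(H)$ uniformly over $H\in\Adm_t(G)$ and invoke Theorem~\ref{thm:admissible}. Such an $H$ is a forest, so $\reg I(H)=1+\nu(H)$. An induced matching of $H$, oriented so that $a(u_i)\le a(v_i)$ and sorted by $a(v_i)$, is an ordered matching of $G$ with $\ell(M)\le 2t-1$:
\begin{itemize}
\item along an admissible path the values $a(u_{i_j})$ strictly decrease, so it has at most $t$ matching edges;
\item a $B$--$B$ edge forces $a(u_i)+a(u_j)\le t-2$.
\end{itemize}
Hence $\reg I(H)\le 1+\alpha_t(G)$, with no induction on $n$.
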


The notion of an ordered matching was introduced by Constantinescu and Varbaro \cite{CV}, who proved that 
$$\lim_{t\to \infty} \operatorname{depth}(S/J(G)^{(t)}) = n - 1 - \nu_o(G)$$
for an arbitrary graph $G$. Later, Hoa, Kimura, Terai, and Trung \cite{HKTT} proved that the function $\operatorname{depth}(S/J(G)^{(t)})$ is weakly decreasing. Nonetheless, explicit values for the depth of symbolic powers of $J(G)$ have only recently become known for paths \cite{DHNT}, cycles \cite{DHV}, and Ferrers graphs \cite{HV}. These calculations use a result of Dung, Hang, and Vu \cite{DHV}, which translates the problem of computing the depth of symbolic powers of the cover ideal of $G$ into that of computing the regularity of the edge ideals of admissible subgraphs of $G$. 

In the next section, we recall the necessary notation and then prove Theorem~\ref{main}.

\section{Depth of symbolic powers of cover ideals via admissible subgraphs}
Throughout this paper, let \( \k \) be a field, \( S=\k[x_1,\ldots,x_n] \) a polynomial ring, and \( \mathfrak{m}=(x_1,\ldots,x_n) \) the \emph{homogeneous maximal ideal} of \( S \).

\subsection{Depth and Castelnuovo--Mumford regularity}

Let \(M\) be a nonzero finitely generated graded \(S\)-module. For each integer \(i\ge 0\), let \(H_{\mathfrak{m}}^i(M)\) denote the \(i\)-th local cohomology module of \(M\) with support in \(\mathfrak{m}\). The depth and the Castelnuovo--Mumford regularity of \(M\) are defined by
\[
\depth(M)=\min\{\,i \mid H_{\mathfrak{m}}^i(M)\neq 0\,\},
\]
and
\[
\reg(M)=\max\{\,j+i \mid H_{\mathfrak{m}}^i(M)_j\neq 0,\ 0\le i\le \dim(M),\ j\in\mathbb{Z}\,\}.
\]

\subsection{Graphs and their edge ideals and cover ideals}  Let $G$ be a simple graph with vertex set $V(G) = \{1, \ldots, n\}$ and edge set $E(G)$. Throughout, we assume that $G$ has no isolated vertices.

\begin{defn}\label{definition1}
Let \( G \) be a simple graph with vertex set \( V(G) = \{1, \ldots, n\} \) and edge set \( E(G) \).

\begin{enumerate}
    \item A simple graph \( H \) is a subgraph of \( G \) if \( V(H) \subseteq V(G) \) and \( E(H) \subseteq E(G) \). It is an \emph{induced subgraph} of \( G \) if \( E(H) = E(G) \cap \big( V(H) \times V(H) \big) \).
    
    \item For a subset \( U \subseteq V(G) \), we denote by \( G[U] \) and \( G \setminus U \) the induced subgraphs of \( G \) on \( U \) and on \( V(G) \setminus U \), respectively.
    
    \item A path \( P_n \) on \( n \) vertices is the graph with vertex set \( V(P_n) = \{1, \ldots, n\} \) and edge set
    \[
    E(P_n) = \{ \{1,2\}, \ldots, \{n-1,n\} \}.
    \]
    
    \item A cycle \( C_n \) on \( n \) vertices is the graph with vertex set \( V(C_n) = \{1, \ldots, n\} \) and edge set
    \[
    E(C_n) = E(P_n) \cup \{ \{1,n\} \}.
    \]
    
    \item A \emph{forest} is a graph with no cycles. A \emph{tree} is a connected forest.
    
    \item A subset \( M \subseteq E(G) \) is called a \emph{matching} of \( G \) if no two edges in \( M \) share a common vertex. It is an \emph{induced matching} if the subgraph induced by the vertices of \( M \) has edge set exactly \( M \). The \emph{induced matching number} of \( G \), denoted by $ \nu(G) $, is the maximum size of an induced matching in \( G \).
    
   \item    A subset $W \subseteq V(G)$ is called a \emph{vertex cover} of $G$ if $W \cap e \neq \emptyset$ for every edge $e \in E(G)$. It is called a \emph{minimal vertex cover} if no proper subset of $W$ is a vertex cover of $G$.
\end{enumerate}
\end{defn}

\begin{defn}
Let $G$ be a graph with vertex set $V(G) = \{1, \ldots, n\}$ and edge set $E(G)$. The \emph{edge ideal} and \emph{cover ideal} of $G$, denoted by $I(G)$ and $J(G)$, respectively, are defined by
$$I(G) = (x_ix_j \mid \{i,j\} \in E(G)),
\quad \text{and} \quad
J(G) = \bigcap_{\{i,j\} \in E(G)} (x_i,x_j).$$
\end{defn}

In particular, $J(G)$ is the Alexander dual of $I(G)$. It is well known that
$$J(G) = (x_W \mid W \text{ is a minimal vertex cover of } G),$$
where $x_W = \prod_{i \in W} x_i$. The $t$-th symbolic power of $J(G)$ is given by
$$J(G)^{(t)}=\bigcap_{ \{i,j\} \in E(G)}(x_i, x_j)^t.$$

\subsection{Ordered alternating paths} 
\begin{defn} 
Let $M$ be an ordered matching in $G$ with free parameter set $A$ and partner set $B$. For a vertex $v$ covered by $M$, an $M$-alternating path $p$ is called an $M$-admissible path for $v$ if:
\begin{enumerate}
    \item $v$ is the origin of $p$,
    \item the first and the last edges of $p$ belong to $M$,
    \item every edge of $p$ has one endpoint in $A$ and the other endpoint in $B$.
\end{enumerate}
\end{defn}

\begin{defn} 
Let $M$ be an ordered matching in a graph $G$ with free parameter set $A$ and partner set $B$. For every vertex $v$ covered by $M$, let $\ell(v)$ be the length of a longest $M$-admissible path for $v$. We define
$$\ell_0 (M) = \max \{ \ell (v) \mid v \in B \} \quad \text{and} \quad \ell_1(M) = \max \{ \ell (u) + \ell(v) + 1 \mid \{u,v\} \in E(G[B]) \},$$
where we adopt the convention that $\ell_1(M) = 0$ if $B$ is an independent set in $G$.   
\end{defn}
The following result \cite[Lemma~1.11]{BHHT} shows that, when $M$ is an ordered matching, $\ell(M)$ can be computed in terms of $\ell_0(M)$ and $\ell_1(M)$.

\begin{lem}\label{lem_length} Let $M$ be an ordered matching in a graph $G$. Then 
$$\ell(M) = \max \{ \ell_0(M), \ell_1(M)\}.$$
\end{lem}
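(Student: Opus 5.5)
The plan is to prove the two inequalities $\ell(M)\ge\max\{\ell_0(M),\ell_1(M)\}$ and $\ell(M)\le\max\{\ell_0(M),\ell_1(M)\}$ separately. The first is constructive and the second is a structural analysis of an arbitrary $M$-alternating path.

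\emph{Lower bound.} Any $M$-admissible path for a vertex $v\in B$ is in particular $M$-alternating, so $\ell(M)\ge \ell_0(M)$. Now let $\{u,v\}\in E(G[B])$, and let $p_u$ and $p_v$ be longest $M$-admissible paths for $u$ and $v$. Since $u,v\in B$ are partners of different free parameters, $\{u,v\}\notin M$. Traverse $p_u$ in reverse, which ends at $u$ with an $M$-edge. Then use the non-$M$ edge $\{u,v\}$. Then follow $p_v$, which starts at $v$ with an $M$-edge. This concatenation is $M$-alternating of length $\ell(u)+1+\ell(v)$, so $\ell(M)\ge\ell_1(M)$. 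Non-simple paths are allowed, so no disjointness of $p_u$ and $p_v$ is needed.

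\emph{Upper bound, reduction.} Take a longest $M$-alternating path $p$. First I reduce to the case where the first and last edges of $p$ lie in $M$. A non-$M$ end edge whose outer endpoint is covered by $M$ can be extended by that vertex's $M$-edge, so in a longest path such an end edge must go to a vertex not covered by $M$. This endpoint step is the delicate point. One has to verify, against the exact conventions of \cite{BHHT} for ordered matchings and for $\ell(M)$, that such a dangling non-$M$ end edge cannot produce a longer path than the bound allows. Otherwise it must be absorbed by the convention on which alternating paths are counted. I expect this bookkeeping to be the main obstacle and would settle it first by checking small examples such as a single matching edge with a pendant unmatched neighbour.

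\emph{Upper bound, structure.} With both end edges in $M$, every vertex of $p$ is covered. Since $A$ is independent, every non-$M$ edge of $p$ is either an $A$–$B$ edge or a $B$–$B$ edge. The key claim is that $p$ uses at most one $B$–$B$ edge, and I would prove it by orienting $p$.
\begin{itemize}
\item A $B$–$B$ edge can only be entered from a $B$-vertex, hence right after an $M$-edge traversed in the direction $A\to B$.
\item After a $B$–$B$ edge $v_i\to v_j$, the next $M$-edge is traversed $B\to A$.
\item From then on, each non-$M$ edge leaves an $A$-vertex, so it goes to $B$, and the following $M$-edge is again traversed $B\to A$.
\end{itemize}
Hence the path never again arrives at a $B$-vertex through an $M$-edge, so no second $B$–$B$ edge can occur.

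\emph{Upper bound, conclusion.} If $p$ has no $B$–$B$ edge, all its edges go between $A$ and $B$. Its length is odd because it starts and ends with $M$-edges, so its two endpoints lie one in $A$ and one in $B$. Reversing if necessary, $p$ is an $M$-admissible path for a vertex of $B$, and so its length is at most $\ell_0(M)$. If $p$ has exactly one $B$–$B$ edge $\{u,v\}$, deleting it splits $p$ into two $A$–$B$ alternating pieces. Each piece starts at $u$ or at $v$ with an $M$-edge and ends with an $M$-edge, so each is $M$-admissible. Therefore the length of $p$ is at most $\ell(u)+1+\ell(v)\le\ell_1(M)$. Combining the two cases with the lower bound gives $\ell(M)=\max\{\ell_0(M),\ell_1(M)\}$.
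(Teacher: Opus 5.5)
The paper gives no proof of this lemma; it quotes it from \cite[Lemma~1.11]{BHHT}, so there is no in-paper argument to compare with. Your lower bound is correct. So is your analysis of alternating paths whose first and last edges lie in $M$. The orientation argument shows that after a $B$--$B$ edge every $M$-edge is traversed $B\to A$ and every non-$M$ edge $A\to B$. It uses only that $A$ is independent and is unaffected by repeated vertices. Splitting at the unique $B$--$B$ edge into two admissible pieces is exactly the decomposition the paper later relies on in the proof of Lemma~\ref{lem:1}.

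\textbf{The gap is your reduction paragraph.} You leave it open, and no argument can close it. Under the literal definition, where any walk whose edges alternate between $M$ and $E(G)\setminus M$ counts, the lemma is false. The test case you propose is a counterexample.
\begin{itemize}
\item Let $G$ have vertices $u,v,w$ and edges $\{u,v\},\{v,w\}$, and let $M=\{\{u,v\}\}$.
\item With either labelling of $A$ and $B$, $\ell_0(M)=1$ and $\ell_1(M)=0$.
\item Yet $u,v,w$ is an $M$-alternating path of length $2$.
\end{itemize}

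So the lemma requires the convention that the $M$-alternating paths counted in $\ell(M)$ begin and end with edges of $M$. The paper needs this convention as well.
\begin{itemize}
\item The subgraph with edge set $\{\{u,v\}\}$ is $1$-admissible, with certificate $a(u)=a(v)=0$, $a(w)=1$. So Lemma~\ref{lem:1} asserts $\ell(M)\le 1$.
\item Under the literal reading, $\alpha_1(G)=0$, and Theorem~\ref{main} would predict depth $2$. But $J(G)=(x_v,x_ux_w)$ gives $\depth(S/J(G))=1$.
\end{itemize}

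\textbf{Fixes.} State this convention at the outset and delete the reduction step; the extension by an $M$-edge becomes irrelevant. Also replace ``take a longest $M$-alternating path'' by ``take an arbitrary one''. Since non-simple paths are allowed, finiteness of $\ell(M)$ is not given in advance. It follows from your bound, together with finiteness of each $\ell(v)$. That finiteness holds because condition (2) forces the indices to strictly increase along an admissible path from a $B$-vertex. With these changes your argument is a complete proof.
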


\subsection{Admissible subgraphs}
\begin{defn}
Let $G$ be a graph with vertex set $V(G)=[n]$. A subgraph $H$ of $G$ is called a \emph{$t$-admissible subgraph} of $G$ if there exists an exponent vector $\mathbf{a}=(a_1,\ldots,a_n)\in\mathbb{N}^n$ such that
\[
E(H)=\bigl\{\{i,j\}\in E(G)\mid a_i+a_j<t\bigr\}.
\]
The set of all $t$-admissible subgraphs of $G$ is denoted by $\operatorname{Adm}_t(G)$. Such an exponent vector $\mathbf{a}$ is called a \emph{certificate of admissibility} for $H$. The value of $\mathbf{a}$ at a vertex $v\in V(G)$ is also denoted by $a(v)$.
\end{defn}

The following result, due to Dung, Hang, and Vu \cite[Lemma~2.11]{DHV}, is the cornerstone of our approach.

\begin{thm}\label{thm:admissible}
Let $G$ be a simple graph. Then
\[
\depth\!\left(S/J(G)^{(t)}\right)
=
n-
\max\left\{
\reg(I(H))
\;\middle|\;
H \in \Adm_t(G)
\right\}.
\]
\end{thm}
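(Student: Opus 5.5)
The plan is to reduce the symbolic power to radical colon ideals, each of which is the cover ideal of an admissible subgraph, and then pass from depth to regularity by Alexander duality. We will use the known formula for monomial ideals
\[
\depth(S/I)=\min\{\depth\!\left(S/\sqrt{I:x^{\a}}\right)\mid \a\in\mathbb{N}^n,\ x^{\a}\notin I\},
\]
which follows from Takayama's description of the local cohomology of $S/I$ through degree complexes. It is the form used in the works on symbolic powers of cover ideals. We apply it with $I=J(G)^{(t)}$.

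\textbf{Step 1 (colon computation).} Colon and radical commute with finite intersections of monomial ideals, so
\[
J(G)^{(t)}:x^{\a}=\bigcap_{\{i,j\}\in E(G)}\bigl((x_i,x_j)^t:x^{\a}\bigr).
\]
The colon $(x_i,x_j)^t:x^{\a}$ equals $S$ when $a_i+a_j\ge t$, and is $(x_i,x_j)$-primary otherwise. Hence
\[
\sqrt{J(G)^{(t)}:x^{\a}}=\bigcap_{\{i,j\}\in E(H)}(x_i,x_j)=J(H),
\]
where $E(H)=\{\{i,j\}\in E(G)\mid a_i+a_j<t\}$, so that $H\in\Adm_t(G)$ with certificate $\a$. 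Conversely, every $H\in\Adm_t(G)$ arises this way. The condition $x^{\a}\notin J(G)^{(t)}$ holds exactly when $E(H)\neq\emptyset$. Therefore $\depth(S/J(G)^{(t)})$ is the minimum of $\depth(S/J(H))$ over the $H\in\Adm_t(G)$ with at least one edge. Here $J(H)$ is regarded in $S$, with the vertices of $H$ kept inside $[n]$.

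\textbf{Step 2 (duality).} The ideals $J(H)$ and $I(H)$ are Alexander dual in $S$. By Terai's theorem, $\operatorname{pd}(S/J(H))=\reg(I(H))$. By Auslander--Buchsbaum, $\depth(S/J(H))=n-\reg(I(H))$. Taking the minimum over $H$ gives the stated formula.

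\textbf{Degenerate subgraph.} The subgraph with no edges contributes nothing on either side. On the depth side it corresponds to $x^{\a}\in J(G)^{(t)}$, so it is excluded from the minimum. On the regularity side it is harmless under the convention $\reg(0)=0$, because $\a=0$ gives $H=G$ with $\reg(I(G))\ge 2$.

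The main obstacle is justifying the depth formula for monomial ideals, specifically that the minimum is attained. The inequality $\depth(S/I)\le\depth(S/\sqrt{I:x^{\a}})$ is standard. For attainment, I would take $i=\depth(S/I)$ and a nonzero graded piece $H^i_{\mathfrak m}(S/I)_{\mathbf b}$. Takayama's formula identifies this piece with the reduced cohomology of a degree complex. Writing $\a=\mathbf b^{+}$ for the positive part of $\mathbf b$, that degree complex is a link in the Stanley--Reisner complex of $\sqrt{I:x^{\a}}$. From this one reads off $H^i_{\mathfrak m}(S/\sqrt{I:x^{\a}})\neq 0$, which gives the reverse inequality.
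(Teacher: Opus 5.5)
Your proposal is correct, and it is essentially the standard argument behind \cite[Lemma~2.11]{DHV}, which the paper cites rather than reproves. That argument runs through the same chain as yours: the formula $\depth(S/I)=\min_{\a}\depth(S/\sqrt{I:x^{\a}})$, the identification $\sqrt{J(G)^{(t)}:x^{\a}}=J(H)$, and then Terai's theorem with Auslander--Buchsbaum. In that formula, the inequality you call standard follows from Rauf's colon inequality together with the Herzog--Takayama--Terai radical inequality, and attainment follows from the Takayama degree-complex and link argument exactly as you sketch.
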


\subsection{Proof of the main result}
We first establish some auxiliary lemmas.

\begin{lem}\label{lem_ind_order} 
Let $G$ be a simple graph and $H \in \operatorname{Adm}_t(G)$. If $M$ is an induced matching of $H$, then $M$ is an ordered matching of $G$.     
\end{lem}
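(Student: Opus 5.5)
Let $H\in\Adm_t(G)$ with certificate $\a$, and let $M=\{\{u_1,v_1\},\ldots,\{u_r,v_r\}\}$ be an induced matching of $H$. We need to orient each edge (choose which endpoint is $u_i$) and order the edges so that the two conditions in the definition of an ordered matching hold. For the orientation, in each edge $\{x,y\}\in M$ we label as $u_i$ the endpoint with the smaller value of $\a$ (ties broken arbitrarily) and as $v_i$ the other endpoint, so $a(u_i)\le a(v_i)$. For the ordering, we index the edges so that $a(u_1)\ge a(u_2)\ge\cdots\ge a(u_r)$, i.e.\ in weakly decreasing order of the smaller endpoint value. The case $r\le 1$ is trivial.

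The key observation is the following. For $i\neq j$, the edge $\{u_i,v_i\}$ lies in $H$, so $a(u_i)+a(v_i)<t$. If some pair $\{x,y\}$ with $x$ in the $i$-th edge and $y$ in the $j$-th edge is an edge of $G$, then it is not an edge of $H$, because $M$ is induced in $H$. By admissibility this forces $a(x)+a(y)\ge t$.

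We now verify condition (1), that $\{u_1,\dots,u_r\}$ is independent in $G$. Suppose $\{u_i,u_j\}\in E(G)$ with $i\ne j$. Then $a(u_i)+a(u_j)\ge t$. On the other hand, $a(u_i)\le a(v_i)$, so $2a(u_i)\le a(u_i)+a(v_i)<t$, and likewise $2a(u_j)<t$. Adding these gives $2(a(u_i)+a(u_j))<2t$, a contradiction.

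We next verify condition (2). Suppose $\{u_i,v_j\}\in E(G)$ with $i> j$. If $i\ne j$, the observation gives $a(u_i)+a(v_j)\ge t$. Since $j<i$, the ordering gives $a(u_j)\ge a(u_i)$, hence
\[
a(u_j)+a(v_j)\ge a(u_i)+a(v_j)\ge t,
\]
which contradicts $\{u_j,v_j\}\in E(H)$. Therefore $\{u_i,v_j\}\in E(G)$ forces $i\le j$, and $M$ is an ordered matching of $G$.

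The only delicate step is choosing the orientation and the ordering compatibly. Using the smaller endpoint value as $u_i$, together with the decreasing order on $a(u_i)$, makes both inequalities go through, so no real obstacle remains.
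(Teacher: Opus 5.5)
Your proof is correct and essentially follows the paper's proof: the same orientation with $a(u_i)\le a(v_i)$ and the same independence argument for $\{u_1,\ldots,u_r\}$. The only difference is the sort key. The paper orders by $a(v_1)\le\cdots\le a(v_r)$ and uses that a cross edge $\{u_i,v_j\}$ forces $a(v_j)>a(v_i)$, while you order by decreasing $a(u_i)$ and use that it forces $a(u_i)>a(u_j)$. Both inequalities follow from $a(u_i)+a(v_j)\ge t$, so either ordering works.
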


\begin{proof} 
Let $\a \in \mathbb{N}^n$ be a certificate of admissibility for $H$. Let the edges of $M$ be $\{u_1, v_1\}, \ldots, \{u_r, v_r\}$. For each edge $\{u_i, v_i\} \in M$, we choose the orientation such that $a(u_i) \le a(v_i)$. We then order the edges of the matching so that $a(v_1) \le a(v_2) \le \cdots \le a(v_r)$. We will now show that $M$ constitutes an ordered matching under this ordering. 

First, we prove that $A = \{u_1, \ldots, u_r\}$ is an independent set in $G$. Suppose by contradiction that $\{u_i, u_j\} \in E(G)$ for some $i < j$. Since $M$ is an induced matching in $H$, it follows that $\{u_i, u_j\} \notin E(H)$, which implies $a(u_i) + a(u_j) \ge t$. On the other hand, by the admissibility of $H$, we have $a(u_i) + a(v_i) \le t-1$ and $a(u_j) + a(v_j) \le t-1$. Summing these inequalities yields $a(u_i) + a(u_j) + a(v_i) + a(v_j) \le 2t-2$. Given our assumption that $a(u_i) \le a(v_i)$ and $a(u_j) \le a(v_j)$, this implies $2a(u_i) + 2a(u_j) \le 2t-2$, or equivalently, $a(u_i) + a(u_j) \le t-1$. This contradicts our earlier deduction that $a(u_i) + a(u_j) \ge t$. Hence, $A$ must be an independent set in $G$. 

Next, assume that $\{u_i, v_j\} \in E(G)$ for some $i \neq j$. Since $M$ is an induced matching in $H$, we have $\{u_i, v_j\} \notin E(H)$, from which we deduce that $a(u_i) + a(v_j) \ge t > a(u_i) + a(v_i)$. This strictly implies $a(v_j) > a(v_i)$. By our chosen ordering of the vertices, it follows that $i < j$, completing the proof.   
\end{proof}

\begin{lem}\label{lem:1}
Let $G$ be a simple graph and $H \in \operatorname{Adm}_t(G)$. If $M$ is an induced matching of $H$, then $\ell(M) \le 2t-1$.
\end{lem}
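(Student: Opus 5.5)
We fix a certificate of admissibility $\a$ for $H$ and orient and order the edges $\{u_i,v_i\}$ of $M$ exactly as in the proof of Lemma~\ref{lem_ind_order}, so that $a(u_i)\le a(v_i)$ and $M$ is an ordered matching of $G$ with free parameter set $A=\{u_1,\dots,u_r\}$ and partner set $B=\{v_1,\dots,v_r\}$. By Lemma~\ref{lem_length} it suffices to prove $\ell_0(M)\le 2t-1$ and $\ell_1(M)\le 2t-1$. Both bounds will follow from one estimate on $\ell(v)$ for $v\in B$, expressed through the $\a$-value of the partner of $v$.

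\textbf{Step 1 (monotonicity along admissible paths).} Take $v=v_{i_1}\in B$ and an $M$-admissible path for $v$. Every edge of such a path joins $A$ and $B$, and the path starts and ends with an $M$-edge. Hence it has the form
$$v_{i_1},u_{i_1},v_{i_2},u_{i_2},\dots,v_{i_k},u_{i_k},$$
it has length $2k-1$, and each $\{u_{i_j},v_{i_{j+1}}\}$ is an edge of $G$ not in $M$, so $i_j\neq i_{j+1}$. Since $M$ is induced in $H$, this edge is not in $H$, and therefore $a(u_{i_j})+a(v_{i_{j+1}})\ge t$. Combined with $a(u_{i_{j+1}})+a(v_{i_{j+1}})\le t-1$, this gives
$$a(u_{i_{j+1}})\le a(u_{i_j})-1 .$$
So the values of $\a$ on the $A$-vertices strictly decrease along the path. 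This remains valid even if the path repeats vertices, which handles the non-simple case automatically. Since all these values are $\ge 0$, we get $k\le a(u_{i_1})+1$, that is,
$$\ell(v_i)\le 2a(u_i)+1 \quad\text{for every } v_i\in B.$$

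\textbf{Step 2 ($\ell_0$).} From $a(u_i)\le a(v_i)$ and $a(u_i)+a(v_i)\le t-1$ we get $2a(u_i)\le t-1$. Step~1 then gives $\ell(v_i)\le t\le 2t-1$, so $\ell_0(M)\le 2t-1$.

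\textbf{Step 3 ($\ell_1$).} Let $\{v_i,v_j\}\in E(G[B])$. This edge is not in $H$, because $M$ is induced in $H$, so $a(v_i)+a(v_j)\ge t$. Using Step~1 together with $a(u_i)\le t-1-a(v_i)$ and $a(u_j)\le t-1-a(v_j)$, we obtain
$$\ell(v_i)+\ell(v_j)+1\le 2\bigl(2t-2-a(v_i)-a(v_j)\bigr)+3\le 2(t-2)+3=2t-1.$$
Hence $\ell_1(M)\le 2t-1$ (and $\ell_1(M)=0$ if $B$ is independent). Lemma~\ref{lem_length} now gives $\ell(M)\le 2t-1$.

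The main point to get right is Step~1. One must check that the non-$M$ edges of an admissible path really go from $A$ to a different pair; this holds by condition (3) of the definition and because $M$ is a matching. One must also extract the strict decrease from the non-edge condition in $H$. Once this is done, Steps~2 and~3 are direct arithmetic.
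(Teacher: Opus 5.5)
Your proof is correct and follows the paper's argument essentially step for step. Both use the non-edge condition in $H$ to get a strict decrease of $a$ along the $A$-vertices of an admissible path, giving $\ell(v_i)\le 2a(u_i)+1$, and then the same arithmetic for $\ell_0(M)$ and $\ell_1(M)$ via Lemma~\ref{lem_length}. Your Step~2 even yields the slightly sharper bound $\ell(v_i)\le t$, since you use $2a(u_i)\le t-1$ where the paper only uses $a(u_i)\le t-1$.
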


\begin{proof} 
As in the proof of Lemma \ref{lem_ind_order}, let $\a \in \mathbb{N}^n$ be a certificate of admissibility for $H$, and let the edges of $M$ be $\{u_1, v_1\}, \ldots, \{u_r, v_r\}$ ordered such that $a(u_i) \le a(v_i)$ and $a(v_1) \le a(v_2) \le \cdots \le a(v_r)$. 

For each $i = 1, \ldots, r$, denote by $d_i$ the maximum number of matching edges in an $M$-admissible path for $v_i$. In other words, $\ell(v_i) = 2d_i - 1$. By Lemma~\ref{lem_length}, we have $\ell(M) = \max \{ \ell_0(M), \ell_1(M) \}$. We will now show that both $\ell_0(M)$ and $\ell_1(M)$ are at most $2t-1$.

First, consider an $M$-admissible path $p = v_{i_1}, u_{i_1}, v_{i_2}, u_{i_2}, \ldots, v_{i_s}, u_{i_s}$. Since $M$ is an induced matching in $H$, the edge $\{u_{i_j}, v_{i_{j+1}}\}$ is not in $H$ for any $j = 1, \ldots, s-1$. Thus, we have $a(u_{i_j}) + a(v_{i_{j+1}}) \ge t > a(u_{i_{j+1}}) + a(v_{i_{j+1}})$, which implies $a(u_{i_j}) > a(u_{i_{j+1}})$ for each $j$. Consequently, the sequence of non-negative integers $a(u_{i_j})$ is strictly decreasing, yielding $s \le a(u_{i_1}) + 1 \le t$. It follows that $\ell(p) \le 2t-1$, bounding $\ell_0(M)$.

Next, suppose $p$ is the concatenation of two $M$-admissible paths via an edge $\{v_i, v_j\}$ for some $i < j$. The length of this path is $\ell(p) = 2d_i + 2d_j - 1$. By the previous argument, we know that $d_i \le a(u_i) + 1$ and $d_j \le a(u_j) + 1$. Since $\{v_i, v_j\} \in E(G)$ but $M$ is an induced matching in $H$, we have $\{v_i, v_j\} \notin E(H)$, which deduces $a(v_i) + a(v_j) \ge t$. Furthermore, by the admissibility of $H$, we have $a(u_i) + a(v_i) \le t-1$ and $a(u_j) + a(v_j) \le t-1$. Summing these lines gives:
$$a(u_i) + a(u_j) \le 2t - 2 - (a(v_i) + a(v_j)) \le 2t - 2 - t = t-2.$$
Thus, we obtain:
$$\ell(p) = 2d_i + 2d_j - 1 \le 2(a(u_i) + a(u_j)) + 3 \le 2(t-2) + 3 = 2t-1,$$
as required. This completes the proof.
\end{proof}

\begin{lem} \label{lem:2} 
Let $G$ be a simple graph and $M$ be an ordered matching of $G$ with free parameter set $A = \{u_1, \ldots, u_r\}$ and partner set $B = \{v_1, \ldots, v_r\}$. Let $t$ be a positive integer, and assume that $\ell(M) \le 2t-1$. Then $H = (V(G), M)$ is $t$-admissible.
\end{lem}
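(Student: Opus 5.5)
The plan is to write down an explicit certificate of admissibility $\a$ for $H=(V(G),M)$, modelled on the inequalities that appeared in the proof of Lemma~\ref{lem:1}, which this statement essentially reverses. For each $i$, let $d_i$ be the maximum number of matching edges in an $M$-admissible path for $v_i$, so that $\ell(v_i)=2d_i-1$.

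First I will check that each $d_i$ is finite and well defined. In an $M$-admissible path $v_{i_1},u_{i_1},v_{i_2},u_{i_2},\ldots$, every non-matching edge $\{u_{i_j},v_{i_{j+1}}\}$ has $i_{j+1}\neq i_j$. Condition (2) of an ordered matching then forces $i_j<i_{j+1}$. So the indices strictly increase, the path is simple, and $1\le d_i\le r$.

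I then define $a(u_i)=d_i-1$ and $a(v_i)=t-d_i$, and $a(w)=t$ for every vertex $w$ not covered by $M$. These values are nonnegative. Indeed, $d_i\ge 1$, and by Lemma~\ref{lem_length} we have $\ell_0(M)\le\ell(M)\le 2t-1$, so $2d_i-1\le 2t-1$, i.e.\ $d_i\le t$.

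Next I verify edge by edge that $\{x,y\}\in E(G)$ satisfies $a(x)+a(y)<t$ exactly when $\{x,y\}\in M$.
\begin{enumerate}
\item For a matching edge, $a(u_i)+a(v_i)=t-1<t$.
\item Any edge meeting an uncovered vertex $w$ has sum at least $a(w)=t$.
\item There are no edges $\{u_i,u_j\}$, since $A$ is independent.
\item For an edge $\{u_i,v_j\}$ with $i\ne j$, we have $i<j$. Prepending $v_i,u_i$ to a longest admissible path for $v_j$ gives an admissible path for $v_i$, so $d_i\ge d_j+1$. Hence $a(u_i)+a(v_j)=t+d_i-d_j-1\ge t$.
\item For an edge $\{v_i,v_j\}\in E(G[B])$, Lemma~\ref{lem_length} gives $\ell(v_i)+\ell(v_j)+1\le\ell_1(M)\le 2t-1$. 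This reads $2d_i+2d_j-1\le 2t-1$, so $d_i+d_j\le t$ and therefore $a(v_i)+a(v_j)=2t-d_i-d_j\ge t$.
\end{enumerate}
Together these show $E(H)=\{\{i,j\}\in E(G)\mid a_i+a_j<t\}$, so $H$ is $t$-admissible.

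The only delicate point is the edge type $\{u_i,v_j\}$. There one must make sure the concatenated walk is still an $M$-admissible path, with first and last edges in $M$ and every edge between $A$ and $B$. One must also confirm that the $d_i$ are finite, which is exactly where the ordering condition of an ordered matching is used. The other cases are direct consequences of Lemma~\ref{lem_length} and the hypothesis $\ell(M)\le 2t-1$.
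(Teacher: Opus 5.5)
Your proposal is correct and follows the paper's proof essentially verbatim. It uses the same certificate $a(u_i)=d_i-1$, $a(v_i)=t-d_i$, $a(w)=t$ for uncovered $w$, handles the $\{u_i,v_j\}$ case by prepending $v_i,u_i$ to an admissible path for $v_j$, and handles the $\{v_i,v_j\}$ case via Lemma~\ref{lem_length}. Your extra checks and your correctly written extended path $(v_i,u_i,p)$, where the paper writes $(u_i,v_i,v_j,p)$, only tidy up the paper's terser write-up; the extra checks are finiteness and nonnegativity of the $d_i$ and edges through uncovered vertices.
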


\begin{proof} 
For each $i$, let $d_i$ be the maximum number of matching edges in an $M$-admissible path starting with $v_i$. Since $\ell(M) \le 2t-1$, it follows that $d_i \le t$. We define the exponent vector $a \in \mathbb{N}^n$ as follows:
$$a(u_i) = d_i - 1, \quad a(v_i) = t - d_i, \quad a(v) = t \text{ if } v \notin M.$$ 
Note that $a(u_i) + a(v_i) = t-1 < t$. By definition, it suffices to prove that $a(u_i) + a(v_j) \ge t$ for every edge $\{u_i, v_j\}$ with $i < j$, and $a(v_i) + a(v_j) \ge t$ for every edge $\{v_i, v_j\}$ with $i < j$. Since $A$ is an independent set, $G$ contains no edges of the form $\{u_i, u_j\}$.  

\smallskip
\noindent \textbf{Case 1.} Assume that $\{u_i, v_j\}$ is an edge of $G$ with $i < j$. Then, any $M$-admissible path $p$ starting at $v_j$ can be extended to an $M$-admissible path $q = (u_i, v_i, v_j, p)$. Hence, $d_i \ge d_j + 1$, which implies that
$$a(u_i) + a(v_j) \ge (d_j + 1 - 1) + (t - d_j) = t.$$ 

\smallskip
\noindent \textbf{Case 2.} Assume that $\{v_i, v_j\}$ is an edge of $G$ with $i < j$. By Lemma~\ref{lem_length}, we have $\ell(M) \ge 2d_i - 1 + 2d_j - 1 + 1$. Since $\ell(M) \le 2t-1$, it follows that $2d_i + 2d_j - 1 \le 2t-1$, which simplifies to $d_i + d_j \le t$. Thus, 
$$a(v_i) + a(v_j) = (t - d_i) + (t - d_j) = 2t - (d_i + d_j) \ge t.$$
This completes the proof of the lemma.
\end{proof}

\begin{proof}[Proof of Theorem \ref{main}] Let $M$ be an ordered matching of $G$ with $\ell(M) \le 2t-1$ and $|M| = \alpha_t(G)$. By Lemma \ref{lem:2}, $H = (V(G),M)$ is $t$-admissible. By Theorem \ref{thm:admissible}, we deduce that 
$$\depth (S/J(G)^{(t)}) \le n - \reg (I(H)) = n - 1 - |M|.$$

Now, assume that $G$ is a forest. It remains to prove that for any $H \in \Adm_t(G)$, we have $\reg (I(H)) \le 1 +\alpha_t(G)$. By \cite[Theorem 2.18]{Z}, since $H$ is a subgraph of a forest, $H$ itself is a forest, we have $\reg (I(H)) = 1 + \nu(H)$. Now, let $M$ be an induced matching of $H$ with $|M| = \nu(H)$. By Lemma \ref{lem_ind_order}, $M$ is an ordered matching of $G$. By Lemma \ref{lem:1}, $\ell(M) \le 2t-1$. Hence, $|M| \le \alpha_t(G)$ by definition. That completes the proof of the theorem.
\end{proof}

When $G$ is a forest, we can drop the condition that $M$ is an ordered matching in Theorem~\ref{main} due to the following lemma. This result establishes a connection between the depth of a power of $J(G)$ and a combinatorial invariant of $G$.

\begin{lem}\label{lem_ord_forest}
Let $G$ be a forest and $M$ a matching of $G$. Then $M$ is an ordered matching.     
\end{lem}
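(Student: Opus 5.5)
We need to show that every matching $M$ of a forest $G$ can be oriented and ordered so that it becomes an ordered matching. The plan is induction on $|M|$, using a leaf-type structure of the auxiliary graph formed by $M$ and its neighbouring edges. Since $G$ is a forest, each connected component can be handled separately: if $M_1,\dots,M_k$ are the restrictions of $M$ to the components and each is ordered, we concatenate the orderings. Edges never cross components, so both conditions in the definition survive. We may therefore assume $G$ is a tree.

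\textbf{Step 1: find a good last edge.} Contract each matching edge $e\in M$ to a single node. Join two such nodes $e,f$ whenever some edge of $G$ connects an endpoint of $e$ to an endpoint of $f$. Between two matching edges there is at most one connecting edge of $G$, since two would create a cycle in $G$. Moreover, contracting the disjoint edges of $M$ in a forest and keeping only the edges between $M$-vertices (an induced subgraph of a forest) yields a forest. Call this forest $T_M$. Pick a node $e=\{x,y\}$ of $T_M$ of degree at most one. Then at most one edge of $G$ joins $\{x,y\}$ to the other vertices covered by $M$.

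\textbf{Step 2: place $e$ first and orient it.} We aim to put $e$ as $\{u_1,v_1\}$, the first edge. The condition ``$\{u_1,v_j\}\in E(G)$ implies $1\le j$'' holds automatically. The condition ``$\{u_i,v_1\}\in E(G)$ implies $i\le 1$'' requires that $v_1$ has no neighbour among the other $u_i$. Independence of $A$ requires that $u_1$ has no neighbour among the other $u_i$.

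If $e$ has no $G$-edge to the other $M$-vertices, we orient it arbitrarily. Otherwise there is a unique edge $\{w,z\}$ with $w\in\{x,y\}$ and $z$ in some $f\in M$. We choose $u_1=w$ and $v_1$ the other endpoint, so that $v_1$ has no neighbours in $V(M)\setminus e$. The only remaining risk is that $z$ is placed in $A$, which would make $u_1,z$ adjacent free parameters. So we need $z\in B$.

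\textbf{Step 3: induction with a constraint.} We strengthen the statement to: for any matching $M$ of a forest and any prescribed vertex $z$ covered by $M$, there is an ordered-matching structure in which $z\in B$. Alternatively, we can prescribe a set of vertices, no two on the same matching edge, all required to be in $B$.

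To prove this, we take the reversed approach and choose the last edge instead. Place a leaf node $e=\{x,y\}$ of $T_M$ as $\{u_r,v_r\}$. The condition on the last edge is that $u_r$ has no neighbour in $A$ and no neighbour among the earlier $v_j$. So we make $u_r$ the endpoint not adjacent to other $M$-vertices, which is possible since $e$ has at most one external connection. Then $v_r=w$ carries the unique external edge to $z\in f$. Now $z$ may lie in $A$ (giving an edge $\{u_i,v_r\}$ with $i<r$, which is allowed) or in $B$ (giving an edge between partners, which is also allowed). Hence no constraint is passed to $M\setminus\{e\}$, and plain induction works.

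If $e$ has no external edge, we orient it arbitrarily. Either way we apply induction to $M\setminus\{e\}$ and append $e$ last. Placing the partner $v_r$ as the endpoint carrying the external edge is the key choice, and I expect the forest structure of $T_M$ to be the main point to verify carefully.
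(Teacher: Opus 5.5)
Your final argument in Step 3 is correct and is essentially the paper's proof. The paper restricts to $G[V(M)]$ and takes a leaf $u$ there, whose only neighbour is its partner $v$; it sets $(u_r,v_r)=(u,v)$ and inducts. That is the same move as your choice of a degree-$\le 1$ node of $T_M$ with $u_r$ the endpoint carrying no external edge, except that the paper does not need to check that $T_M$ is a forest.

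Steps 1--2 and the strengthened statement are an unused detour and should be dropped. The ``prescribed set in $B$'' version is in fact false: for $P_4$ with $M=\{\{1,2\},\{3,4\}\}$, forcing $1,4\in B$ makes $A=\{2,3\}$ non-independent.
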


\begin{proof}
Since the definition of an ordered matching only involves vertices appearing in $M$, we may assume without loss of generality that $M$ covers all vertices of $G$; that is, $G$ has a perfect matching. We proceed by induction on the number of vertices of $G$. 

The base case where $|V(G)| = 2$ (i.e., $G$ consists of a single edge) is trivial. Now, assume that $|V(G)| > 2$. Let $u$ be a leaf of $G$ and let $v$ be its unique neighbor. Because $M$ is a perfect matching, it necessarily follows that $\{u, v\} \in M$. Consider the smaller matching $M' = M \setminus \{\{u, v\}\}$ on the induced subgraph $G \setminus \{u, v\}$, which is also a forest. By the induction hypothesis, $M'$ is an ordered matching with free parameter set $A' = \{u_1, \ldots, u_{r-1}\}$ and partner set $B' = \{v_1, \ldots, v_{r-1}\}$. 

Now, let $A = A' \cup \{u\}$ and $B = B' \cup \{v\}$, and set $u_r = u$ and $v_r = v$. We claim that this extended ordering defines an ordered matching for $M$. Since $A'$ is an independent set and $u$ is a leaf whose only neighbor is $v \notin A$, it follows that $A$ is also an independent set. Furthermore, because $u_r = u$ is a leaf and has no edges connecting to any vertex in $B'$, there are no edges of the form $\{u_r, v_j\}$ for $j < r$. Hence, the requirements for an ordered matching are satisfied.
\end{proof}

\begin{cor}\label{thm:admissibletree} 
Let $T$ be a forest on $n$ vertices. Then
$$\operatorname{depth}(S/J(T)^{t}) = n - 1 - \alpha_t(T),$$
where 
$$\alpha_t(T) = \max \{ |M| \mid M \text{ is a matching of } T \text{ and } \ell(M) \le 2t - 1\}.$$
\end{cor}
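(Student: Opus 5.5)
The plan is to reduce the corollary to the equality case of Theorem~\ref{main} and then use Lemma~\ref{lem_ord_forest} to identify the two versions of $\alpha_t(T)$. The argument has three steps.

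\textbf{Step 1: symbolic versus ordinary powers.} The statement concerns the ordinary power $J(T)^t$, while Theorem~\ref{main} concerns $J(T)^{(t)}$. A forest has no odd cycles, so it is bipartite. For a bipartite graph $G$ I will invoke the known result (Herzog--Hibi--Trung; equivalently, Gitler--Reyes--Villarreal on normally torsion-free ideals) that the cover ideal satisfies $J(G)^{(t)} = J(G)^t$ for all $t \ge 1$. Hence
\[
\depth(S/J(T)^t)=\depth(S/J(T)^{(t)}).
\]
This is the only ingredient not contained in the earlier part of the paper. I expect it to be the main point needing care, although it is a standard citation rather than a new argument. If one wanted a self-contained route, one would show that $I(T)$, hence its dual, is normally torsion-free because the incidence matrix of a bipartite graph is totally unimodular. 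I would simply cite it.

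\textbf{Step 2: the equality case of the main theorem.} Since $T$ is a forest, the equality case of Theorem~\ref{main} gives
\[
\depth(S/J(T)^{(t)}) = n-1-\alpha_t(T),
\]
where $\alpha_t(T)$ is defined via ordered matchings $M$ with $\ell(M)\le 2t-1$.

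\textbf{Step 3: dropping orderedness.} It remains to see that this $\alpha_t(T)$ equals the maximum of $|M|$ over all matchings $M$ of $T$ with $\ell(M)\le 2t-1$.
\begin{enumerate}
    \item Every ordered matching is a matching, so the ordered maximum is at most the unrestricted one.
    \item Conversely, by Lemma~\ref{lem_ord_forest} every matching of a forest is ordered. So the two sets over which the maximum is taken coincide, and the reverse inequality holds.
\end{enumerate}
Note that $\ell(M)$ depends only on $M$ and $T$, not on the chosen order. The condition $\ell(M)\le 2t-1$ is therefore the same in both formulations.

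Combining Steps 1--3 yields the displayed formula for $\depth(S/J(T)^t)$.
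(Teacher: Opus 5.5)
Your proposal is correct and follows the paper's proof exactly. The paper likewise cites Herzog--Hibi--Trung (Theorem 5.1) for $J(T)^{(t)}=J(T)^t$, applies the equality case of Theorem~\ref{main}, and uses Lemma~\ref{lem_ord_forest} to remove the orderedness condition from $\alpha_t(T)$. One minor caution on your optional self-contained remark: normal torsion-freeness of $I(T)$ does not pass to its Alexander dual in general, so the total unimodularity argument should be applied directly to the cover polyhedron $\{x\ge 0 : x_i+x_j\ge 1\}$, via its integer decomposition property, to get $J(T)^{(t)}=J(T)^t$.
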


\begin{proof}
By \cite[Theorem 5.1]{HHT}, we have $J(T)^{(t)} = J(T)^t$. The conclusion then follows immediately from Theorem \ref{main} and Lemma \ref{lem_ord_forest}.
\end{proof}

When $t$ is sufficiently large, $\alpha_t(G) = \nu_o(G)$ and the equality holds by the result of Constantinescu and Varbaro \cite{CV}. When $t = 1$, equality holds if and only if $\operatorname{reg}(I(G)) = 1 + \nu(G)$. Hence, if we expect the equality to hold for all $t$, then $G$ must satisfy this condition. A natural class of graphs satisfying this condition is the class of weakly chordal graphs. Computational evidence suggests the following conjecture.

\begin{conj} 
Let $G$ be a weakly chordal graph on $n$ vertices. Then 
$$\operatorname{depth}(S/J(G)^{(t)}) = n - 1 - \alpha_t(G)$$
for all $t \ge 1$.
\end{conj}

\end{document}